\newcommand{\R}{\ensuremath{\mathbb{R}}}
\newtheorem {theorem} {Theorem} 
\newtheorem {proposition} [theorem] {Proposition}
\newtheorem {lemma} [theorem] {Lemma}
\newtheorem {definition} [theorem] {Definition}
\newtheorem {remark} {Remark}
\begin{document}


\title[Birth of isolated nested cylinders and limit cycles in 3D PSVFs] {Birth of isolated nested cylinders and limit cycles in 3D piecewise smooth vector fields with symmetry}

\author[T. Carvalho and B.R. Freitas]
{Tiago Carvalho$^1$ and Bruno Rodrigues de Freitas$^2$}

\address{$^1$ Departamento de Matem\'{a}tica, Faculdade de Ci\^{e}ncias,
	UNESP, Av. Eng. Luiz Edmundo Carrijo Coube 14-01,  CEP 17033-360,
	Bauru, SP, Brazil.}\email{tcarvalho@fc.unesp.br}

\address{$^2$ Universidade Federal de Goi\'{a}s, IME, CEP 74001-970, Caixa Postal 131, Goi\^{a}nia, Goi\'{a}s, Brazil.}
\email{freitasmat@ufg.br}

\subjclass[2010]{Primary 34A36, 34A26, 37G15, 37G35}

\keywords{periodic solutions, limit cycles, invariant cilinders, bifurcation, piecewise smooth vector fields}
\date{}
\dedicatory{}

\maketitle

\begin{abstract}
Our start point is a 3D piecewise smooth vector field defined in two zones and presenting a shared fold curve for the two smooth vector fields considered. Moreover, these smooth vector fields are symmetric relative to the fold curve, giving raise to a continuum of nested topological cylinders such that each orthogonal section of these cylinders is filled by centers. First we prove that the normal form considered represents a whole class of piecewise smooth vector fields. After we perturb the initial model in order to obtain exactly $\mathcal{L}$ invariant planes containing centers. A second perturbation of the initial model also is considered in order to obtain exactly $k$ isolated cylinders filled by periodic orbits. Finally, joining the two previous bifurcations we are able to exhibit a model, preserving the symmetry relative to the fold curve, and having exactly $k.\mathcal{L}$ limit cycles.
\end{abstract}

\section{Introduction}\label{secao introducao}

Vector fields tangent to foliations, Hamiltonian systems and first integrals of vector fields are correlated themes very exploit in the literature about Dynamical Systems. In fact the list of papers on these subjects is extremely large and we cite just the books \cite{Livro-Bluman,Livro-CamachoNet,Livro-LlibreMoeckelSimo,Livro-Hamiltonian} for a brief notion on these issues.

Many authors have used the theoretical aspects about vector fields tangent to foliations, Hamiltonian systems and first integrals of vector fields in order to obtain dynamical properties of models describing some system in applied science. A far from exhaustive list of books in this sense is given by \cite{Livro-Bogolyubov,Livro-Chrusciel,Livro-Greiner}.

In recent years, scientists are realizing the importance and applicability of a new branch of dynamical systems that are powerful tools in phenomena where some \textit{``on-off''} phenomena take place. For example, in control theory (see \cite{Rossa}), mechanics models (see \cite{Brogliato,Dixon,Leine}), electrical circuits (see \cite{Kousaka}), relay systems (see  \cite{diBern-relay,Jac-To}), biological models (with refuge see \cite{Krivan}, foraging predators see \cite{Piltz}), among others where an instantaneous change on the system is observed when any barrier is broken. These dynamical systems are modeled by \text{``pieces''} and are called \textbf{piecewise smooth vector fields} (PSVFs for short).

Many authors have contributed to provide a general and consistent theory about PSVFs. We cite here the works \cite{diBern-livro,Simpson} where a non familiar reader can found the main definitions, conventions and results on this theory. However, very little have been studied about PSVFs tangent to (piecewise) foliations, Hamiltonian PSVFs and first integrals of PSVFs. Addressing this topic we cite \cite{CarvalhoTeixeira-JDE2016,JinWuHuang,Livro-Luo}.

The present paper deals precisely with PSVFs tangent to piecewise foliations. We found first integrals for them and perform bifurcations on the unstable PSVFs obtained. In fact, a very rich behavior is observed and, which it is very important, an almost fully exploit study area is brought to the surface.

\subsection{Setting the problem and statement of the main results}\label{secao colocacao problema}

Let  $\Sigma$ be a codimension one 3D manifold   given by
$\Sigma =f^{-1}(0),$ where $f:\R^3 \rightarrow \R$ is a smooth function
having $0\in \R$ as a regular value (i.e. $\nabla f(p)\neq 0$, for
any $p\in f^{-1}({0}))$.  We call $\Sigma$ the \textit{switching
	manifold} that is the separating boundary of the regions
$\Sigma^+=\{q\in \R^3 \, | \, f(q) \geq 0\}$ and $\Sigma^-=\{q \in \R^3 \,
| \, f(q)\leq 0\}$. 

Take $X: \Sigma^+ \rightarrow \R^3$ (resp., $Y: \Sigma^- \rightarrow \R^3$) smooth vector fields. We combine them in order to constitute the PSVF $Z: \R^3 \rightarrow \R ^{3}$ given by
\[
Z(x,y,z)=\left\{\begin{array}{l} X(x,y,z),\quad $for$ \quad (x,y,z) \in
\Sigma^+,\\ Y(x,y,z),\quad $for$ \quad (x,y,z) \in \Sigma^-.
\end{array}\right.
\]
 The trajectories of $Z$ are
solutions of  ${\dot q}=Z(q)$ and we will accept that $Z$ is
multi-valued in points of $\Sigma$. The basic results of
differential equations, in this context, were stated in \cite{Fi}. We use the notation $Z=(X,Y)$. 

Given $p \in \Sigma$, throughout this paper we do not consider the situation where both vector fields $X$ and $Y$ have trajectories arriving (resp. departing) from $p$ transversally. In these cases $p$ is called in the literature as a \textbf{sliding} (resp. \textbf{escaping}) point. So, here we assume that when an $X$-trajectory reaches $p \in \Sigma$ transversally, then there is a $Y$-trajectory starting at $p$ and transversal to $\Sigma$, i.e., generically, just \textbf{crossing} points will be considered.

In fact, the initial model that we consider is
\begin{equation}\label{eq forma normal centro}
Z_0(x,y,z)= \left\{
\begin{array}{lll}
X_0(x,y,z) = \left(
\begin{array}{c}
0 \\
-1  \\
2 y
\end{array}
\right)
& \hbox{if $z \geq 0$,} \\
Y_0(x,y,z) =  \left(
\begin{array}{c}
0 \\
1  \\
2 y
\end{array}
\right)& \hbox{if $z \leq 0$.}
\end{array}
\right.
\end{equation}The phase portrait of \eqref{eq forma normal centro} is given in Figure \ref{cilindro invariante}.
\begin{figure}[h!]
	\begin{center}
		\begin{overpic}[width=2.5in]{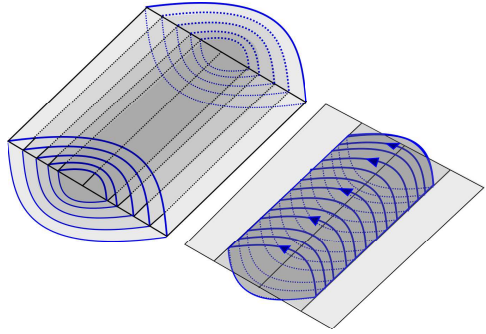}
		\end{overpic}
	\end{center}
	\caption{Topological cylinders.}\label{cilindro invariante}
\end{figure}

It is patent the symmetry of the trajectories obtained from \eqref{eq forma normal centro}. Moreover, we get that $H_1(x,y,z)=x$ and $H_2(x,y,z)=z+y^2$ (resp., $L_1(x,y,z)=x$ and $L_2(x,y,z)=z-y^2$) are independent first integrals of $X_0$ (resp., $Y_0$). The orbits of $X_0$ are contained in the sets $\{H_1=c_1\} \cap \{H_2=c_2\} \cap \{z\geq0\}$ and the orbits of $Y_0$ are contained in the sets $\{L_1=c_3\} \cap \{L_2=c_4\} \cap \{z\leq0\}$, with $c_1,c_2,c_3,c_4\in\R$.

So, a pair of \textit{piecewise first integrals} of \eqref{eq forma normal centro} is
\[
M_1(x,y,z)= x \mbox{ and } M_2(x,y,z)=\left\{
\begin{array}{lll}
H_2(x,y,z)
& \hbox{if $z \geq 0$,} \\
L_2(x,y,z) & \hbox{if $z \leq 0$.}
\end{array}
\right.
\]
Of course, the trajectories of $X_0$ (resp., $Y_0$) leave at the intersection of the transversal (in fact, orthogonal) foliations $H_1$ and $H_2$ (resp., $L_1$ and $L_2$) and $X_0$ (resp. $Y_0$) is a vector field tangent to both foliations. As consequence, the PSVF $Z_0=(X_0,Y_0)$ is tangent to the foliation $M_1$ and the \textit{piecewise} foliation $M_2$. Moreover, all orbits of $Z_0$ are closed and topologically equivalent to $S^1$.

We stress that, in general (where we admit sliding and escaping motion on $\Sigma$), is \textbf{false} the natural aim: {\it The piecewise smooth mapping $H=
	\frac{h + l}{2} +{\rm sign}(z) \frac{h - l}{2}$ is a first integral of the vector field $Z=(X,Y)$
	provided that $h$ and $l$ are smooth first integrals of $X$ and $Y$ respectively}. See \cite{CarvalhoTeixeira-JDE2016} for examples.

Also observe that $Z_0$ is such that $Z_0(x,y,z)=-Z_0(-x,-y,-z)$ and so, it is $\varphi$\textit{-reversible}, where $\varphi(x,y,z)=(-x,-y,-z)$.

Another important definition is the concept of equivalence between two PSVFs.
\begin{definition}\label{definicao sigma-equivalencia}
	Two PSVFs $Z=(X,Y), \,
	\widetilde{Z}=(\widetilde{X},\widetilde{Y}) \in \Omega$, where $\Omega$ be the set of all PSVF endowed with the $C^r$ product topology, defined in
	open sets $U, \, \widetilde{U}$ and with switching manifold $\Sigma$
	are \textbf{$\mathbf{\Sigma}$-equivalent} if there exists an
	orientation preserving homeomorphism $h: U \rightarrow
	\widetilde{U}$ that sends $U\cap\Sigma$ to
	$\widetilde{U}\cap\Sigma$, the orbits of  $X$ restricted to
	$U\cap\Sigma^+$ to the orbits of $\widetilde{X}$ restricted to
	$\widetilde{U}\cap\Sigma^+$, and the orbits of  $Y$ restricted to
	$U\cap\Sigma^-$ to the orbits of $\widetilde{Y}$ restricted to
	$\widetilde{U}\cap\Sigma^-$.
\end{definition}

Now we state the main results of the paper.
\begin{proposition}\label{teorema equivalencia}
Let $Z=(X,Y)$ be a PSVF defined in a compact $\mathcal{M}$ presenting a continuous of topological cylinders filled by periodic orbits, then $Z$ is $\Sigma$-equivalent to $Z_0$ given by \eqref{eq forma normal centro}.
\end{proposition}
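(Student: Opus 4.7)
The plan is to build a $\Sigma$-equivalence between $Z$ and $Z_0$ by introducing cylindrical coordinates adapted to the invariant cylinder foliations on each side, and then declaring the equivalence to be the identity in those coordinates.

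First I would identify the shared fold curve. As the nested invariant cylinders of $Z$ shrink, they must accumulate on a limit curve $\gamma$. Because each cylinder meets $\Sigma$ in two closed curves that pinch together as the cylinder collapses, $\gamma$ must lie in $\Sigma$, and both $X$ and $Y$ must be tangent to $\Sigma$ along it; hence $\gamma$ is a common fold. A local diffeomorphism of $\mathcal{M}$ carries $\Sigma$ to $\{z=0\}$ and $\gamma$ to the $x$-axis, aligning the geometric frame with that of $Z_0$.

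Next, I would introduce adapted coordinates. Parameterize the cylinders by a transverse radial coordinate $r \geq 0$ (with $r=0$ corresponding to $\gamma$), each cylinder by its longitudinal projection onto $\gamma$ (coordinate $x$), and each periodic orbit by the normalized flow-time $\theta \in \mathbb{S}^{1}$. Applying the same procedure to $Z_0$ yields an analogous $(r,x,\theta)$-chart; setting $h(r,x,\theta) = (r,x,\theta)$ produces an orientation-preserving map that sends $\Sigma$ to $\Sigma$ and orbits of $X$, $Y$ to those of $X_0$, $Y_0$, respectively.

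The main obstacle is checking that these coordinates behave well at two places: at the fold curve $\gamma$, where the angular coordinate degenerates, and across $\Sigma$, where each periodic orbit switches vector field. The first issue is handled by observing that as $r \to 0$ the $\mathbb{S}^{1}$-fiber collapses to a point in a way that exactly matches the collapse for $Z_0$, so $h$ extends continuously to $\gamma$. The second is handled by the crossing hypothesis: every periodic orbit meets $\Sigma$ transversally at exactly two points, so the $X$-arc and $Y$-arc concatenate with a well-defined total period, and $\theta$ can be defined coherently across the switch. Combined with continuous dependence of the cylinder family on $r$ and the compactness of $\mathcal{M}$, this upgrades the coordinate-wise identification to the desired global $\Sigma$-equivalence.
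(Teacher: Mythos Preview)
Your overall strategy matches the paper's: both build a homeomorphism by introducing coordinates adapted to the fold curve, the cylinder foliation, and the orbit structure, and then declaring the equivalence to be the identity in those coordinates. The paper's construction is slightly more concrete: it identifies the two fold curves $L$ and $\overline{L}$ by arc length, erects at each fold point a segment orthogonal to $\Sigma$ (playing the role of your radial coordinate), and then parametrizes the $X$-arc and the $Y$-arc of each periodic orbit \emph{separately} by arc length rather than by a single normalized flow-time on the whole loop.

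That last point is where your proposal has a genuine gap. If $\theta\in\mathbb{S}^1$ is the flow-time normalized by the \emph{total} period, then for $Z_0$ the two $\Sigma$-crossings of a periodic orbit sit at $\theta=0$ and $\theta=1/2$ by symmetry, but for a general $Z$ the $X$-arc and $Y$-arc need not take equal fractions of the period, so the second crossing lands at some $\theta\neq 1/2$. The map ``identity in $(r,x,\theta)$'' then fails to send $\Sigma$ to $\Sigma$, violating the definition of $\Sigma$-equivalence. The fix is exactly what the paper does: rescale the upper and lower arcs independently (by arc length or by half-period each), so that the switching locus is pinned at the same two $\theta$-values on both sides. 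A smaller issue is that your ``longitudinal projection onto $\gamma$'' is not obviously well defined for $Z$, since its periodic orbits need not lie in planes transverse to $\gamma$; the paper sidesteps this by \emph{starting} from points of $L$ and building the transverse segments outward, which gives the $x$-coordinate constructively rather than by projection.
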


\vspace{.3cm}

\noindent {\bf Theorem A.} {\it Let $Z_0$ be given by \eqref{eq forma normal centro}. For any neighborhood $\mathcal{W} \subset \Omega$ of
	$Z_0$ and for any integer $\mathcal{L}>0$, there exists
	$\widetilde{Z} \in \mathcal{W}$ such that $\widetilde{Z}$ has $\mathcal{L}$ $Z_0$-invariant planes. Moreover, in each plane there is a center of $Z_0$.} 

\vspace{.3cm}

\noindent {\bf Theorem B.}
{\it Let $Z_0$ be given by \eqref{eq forma normal centro}. For any neighborhood $\mathcal{W} \subset \Omega$ of
$Z_0$ and for any integer $k>0$, there exists
$\widetilde{Z} \in \mathcal{W}$ such that $\widetilde{Z}$ has $k$ isolated invariant topological cylinders filled by periodic orbits. The same holds if
$k=\infty$.}

\vspace{.3cm}

\noindent {\bf Theorem C.} {\it Let $Z_0$ be given by \eqref{eq forma normal centro}. For any neighborhood $\mathcal{W} \subset \Omega$ of
	$Z_0$ and for any integers $\mathcal{L}>0$ and $k>0$, there exists
	$\widetilde{Z} \in \mathcal{W}$ such that $\widetilde{Z}$ has $\mathcal{L}.k$ hyperbolic limit cycles. The same holds if
	$k=\infty$. Moreover, the stability of each limit cycle is obtained. See Figure \ref{Fig Traj}.}

\begin{figure}[h!]
	\begin{center}
		\begin{overpic}[width=5in]{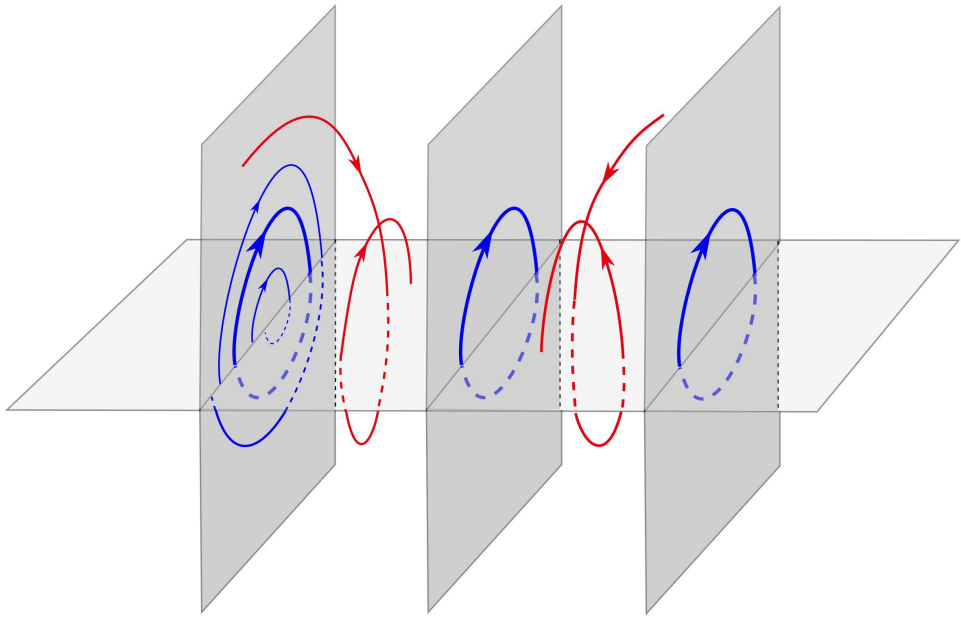}
		\end{overpic}
	\end{center}
	\caption{The trajectories according to Theorem C.}\label{Fig Traj}
\end{figure}

\vspace{.3cm}

Moreover, in the previous theorems,  we explicitly build families of PSVFs presenting the quoted properties. 

The paper is organized as follows. In Section \ref{secao preliminares} we introduce the terminology, some definitions and the basic theory about PSVFs. Sections \ref{secao teorema equivalencia}, \ref{secao prova teo A}, \ref{secao prova teo B} and \ref{secao prova teo C} are devoted to prove Proposition \ref{teorema equivalencia}, Theorem A, Theorem B and Theorem C, respectively.

\section{Preliminaries}\label{secao preliminares}


	\begin{definition}\label{definicao centro}
		Consider $Z \in \Omega$. We say that $q\in \Sigma$ is a \textbf{$\mathbf{\Sigma}$-center} of $Z$  if $q\in \Sigma$ and there is a codimension one manifold $\mathcal{S}$ such that $\Sigma \cap \hspace{-.28cm}| \,\, \mathcal{S}$ and there is a neighborhood $U\subset\R^3$ of $q$ where $U \cap \mathcal{S}$ is filled by a one-parameter family $\gamma_{s}$ of closed orbits of $Z$ in such a way that the orientation is preserved.
	\end{definition}

Consider the notation $X.f(p)=\left\langle \nabla f(p), X(p)\right\rangle$ and, for $i\geq 2$,  $X^i.f(p)=\left\langle \nabla X^{i-1}. f(p), X(p)\right\rangle$, where $\langle . , . \rangle$ is the usual inner product in $\R^3$. We say that a point $p \in \Sigma$ is a \textit{$\Sigma$-fold point} of $X$ if $X.f(p)=0$ but $X^{2}.f(p)\neq0.$ Moreover, $p\in\Sigma$ is a \textit{visible} (respectively {\it invisible}) $\Sigma$-fold point of $X$ if $X.f(p)=0$ and $X^{2}.f(p)> 0$ (respectively $X^{2}.f(p)< 0$). We say that $p \in \Sigma$ is a \textit{two-fold singularity} of $Z$ if $p$ is a $\Sigma$-fold point for both $X$ and $Y$. In this work, we consider only two-fold singularities of type invisible-invisible, ie, the fold points are invisible for both, $X$ and $Y$.

\begin{remark}\label{obs posicao dobras}Since $f(x,y,z)=z$, we conclude from \eqref{eq forma normal centro} that $L=\{(x,0,0) \, | \, x  \in \R \} \subset \Sigma$ is the curve of invisible fold singularities of both $X_0$ and $Y_0$.\end{remark}

Consider the case when the PSVF $Z=(X,Y)$ has $q$ as two-fold singularity.  
We can define the \textit{positive
half-return map} as $\varphi_X(\rho)=\rho^+$, and the \textit{negative
 half-return map} as $\varphi_Y(\rho^+)=\rho^-$ (see Figure \ref{1retorno}). The
complete \textit{return map} associated to $Z$ is given by the
composition of these two maps
\begin{equation}\label{eq:4}
 \varphi_Z(\rho)=\varphi_Y(\varphi_X(\rho)).
\end{equation}

\begin{figure}[h!]
\begin{center}
\begin{overpic}[width=2.5in]{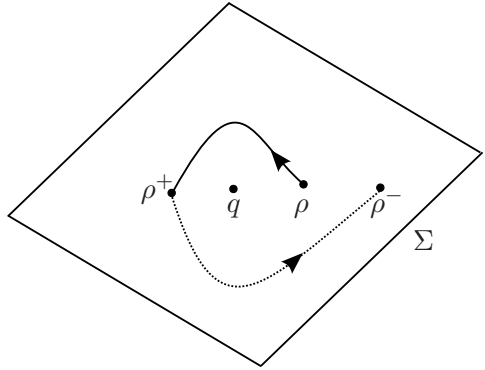}
\put(46,33){$q$}\put(60,33){$\rho$}\put(28,36){$\rho^+$}\put(76,33){$\rho^-$}\put(85,25){$\Sigma$}
\end{overpic}
\end{center}
\caption{Return map of $Z=(X,Y)$.}\label{1retorno}
\end{figure}

%
\begin{proposition}\label{propiedades}
The PSVF $Z_0=(X_0,Y_0)$ given by~\eqref{eq forma normal centro} has a continuous of topological cylinders and, in each cylinder, all orbits are periodic (see Figure~\ref{cilindro invariante}). Moreover, in each plane $\pi_M=\{(x,y,z) \, | \, x = M\}$, there is a $\Sigma$-center.
\end{proposition}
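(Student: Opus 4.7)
The plan is to exploit the explicit first integrals $H_2(x,y,z)=z+y^2$ of $X_0$ and $L_2(x,y,z)=z-y^2$ of $Y_0$ already identified in the preceding discussion. Since both $X_0$ and $Y_0$ satisfy $\dot x=0$, every trajectory remains in its plane $\pi_M=\{x=M\}$, so the entire analysis reduces to a two-dimensional problem in the $(y,z)$-slice of each such plane.

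First, I would fix $M\in\R$ and $c>0$ and describe the putative periodic orbit. The level set $\{H_2=c\}\cap\{z\geq 0\}$ inside $\pi_M$ is the downward-opening parabolic arc $z=c-y^2$, $y\in[-\sqrt c,\sqrt c]$, which meets $\Sigma$ exactly at $p_\pm=(M,\pm\sqrt c,0)$; similarly, $\{L_2=-c\}\cap\{z\leq 0\}$ is the upward-opening arc $z=y^2-c$ with the same endpoints $p_\pm$. Their union is a simple closed curve $\gamma_{M,c}$ in $\pi_M$ enclosing the fold point $(M,0,0)$.

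Next, I would verify that $\gamma_{M,c}$ is actually a trajectory of $Z_0$, which requires checking the crossing condition at $p_\pm$ and matching the orientations of the two arcs. At $p_+$ both $X_0.f=2y=2\sqrt c>0$ and $Y_0.f=2\sqrt c>0$, so $p_+$ is a crossing point from $\Sigma^-$ to $\Sigma^+$; at $p_-$ both quantities equal $-2\sqrt c<0$, giving a crossing from $\Sigma^+$ to $\Sigma^-$. Integrating $X_0$ (with $\dot y=-1$) from $p_+$ traces the upper arc onto $p_-$, and integrating $Y_0$ (with $\dot y=+1$) from $p_-$ traces the lower arc back to $p_+$, so the two half-orbits concatenate into a single periodic orbit. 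This also re-confirms, consistently with Remark \ref{obs posicao dobras}, that the folds are of invisible-invisible type.

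Finally, fixing $c>0$ and letting $M$ vary over $\R$ gives $\mathcal{C}_c=\bigcup_{M\in\R}\gamma_{M,c}$, which is a translate of the closed curve $\gamma_{0,c}$ along the $x$-axis and hence a topological cylinder foliated by periodic orbits; letting $c$ range over $(0,\infty)$ produces the continuous nested family. For the last assertion, I fix $M$ and take $\mathcal{S}=\pi_M$: it is a codimension one manifold transversal to $\Sigma$ (they meet in the line $\{(M,y,0)\}$), the one-parameter family $\{\gamma_{M,c}\}_{c>0}$ lies in $\pi_M$, fills a punctured neighborhood of $(M,0,0)$ in $\pi_M$, and by the previous paragraph each orbit is traversed with the same orientation, so $(M,0,0)$ is a $\Sigma$-center in the sense of Definition \ref{definicao centro}. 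No real obstacle arises: the computation is essentially two-dimensional thanks to the invariance of $x$, and the only subtle point is the bookkeeping for crossing versus sliding at $p_\pm$, which the explicit sign of $2y$ resolves immediately.
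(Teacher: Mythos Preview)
Your proof is correct, but it takes a different route from the paper's own argument. The paper proceeds by explicitly integrating $X_0$ and $Y_0$ to obtain the half-return maps $\varphi_{X_0}(x,y)=(x,-y)$ and $\varphi_{Y_0}(x,y)=(x,-y)$, and then observes that their composition $\varphi_{Z_0}$ is the identity on $\Sigma$; from this the periodic structure follows. Your argument instead exploits the first integrals $H_2$ and $L_2$ to describe each orbit directly as a union of two parabolic arcs, and then checks the crossing conditions and orientations by hand. The return-map computation is slightly quicker and, more importantly, it sets up the machinery $\varphi_{Z_0}=(\varphi_{Z_0}^1,\varphi_{Z_0}^2)$ that the paper perturbs component-wise in the proofs of Theorems~A, B and~C. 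Your approach, on the other hand, is more self-contained and actually spells out why the union over $M$ of the $\gamma_{M,c}$ is a topological cylinder and why each $(M,0,0)$ satisfies Definition~\ref{definicao centro}, points the paper leaves implicit.
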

\begin{proof}
For a direct integration, the trajectories of $X_0$ and $Y_0$ are parametrized by 
\begin{equation}\label{eq param X0}\phi_{X_{0}}(t)=(x_0, -t+y_0, -t^2+2ty_0)\end{equation}and \begin{equation}\label{eq param Y0}\phi_{Y_{0}}(t)=(x_1, t+y_1, t^2+2ty_1),\end{equation}respectively.
Note that $\phi_{X_{0}}(0)=(x_0,y_0,0)$ and $\phi_{Y_{0}}(0)=(x_1,y_1,0)$. Thus the positive half-return map is
$\varphi_{X_0}(x,y)=(x,-y).$
Analogously, the negative half-return map is
$\varphi_{Y_0}(x,y)=(x,-y).$ Therefore, the
complete \textit{return map} associated to $Z_0$ is given by
\[ \varphi_{Z_0}(x,y)=\varphi_{Y_0}(\varphi_{X_0}(x,y))=(x,y).\]
\end{proof}

Note that by Proposition~\ref{propiedades}, we get $\varphi_{Z_0}(x,y)=(\varphi_{Z_0}^1(x),\varphi_{Z_0}^2(y))$, where $\varphi_{Z_0}^1(x)=x$ and $\varphi_{Z_0}^2(y)=y$. In order to obtain isolated $Z_0$-invariant planes we perturb the map $\varphi_{Z_0}^1(x)$ (see Theorem A), and in order to obtain isolated $Z_0$-topological cylinders we perturb the map $\varphi_{Z_0}^2(y)$ (see Theorem B). When we perturb both we are able to obtain hyperbolic limit cycles (see Theorem C).

\vspace{.5cm}

\begin{remark}$\;$
\begin{itemize}
\item In this work we decide consider only perturbations of \eqref{eq forma normal centro} that keep the straight line $L=\{(x,y,z)\in\mathbb{R}^3;y=z=0\}$ as a two-fold singularity. This assumption is important because in this case the return map is always well defined.

\item In this sense the return map of all trajectories considered in this paper
is given by the composition of two involutions (see \cite{Marco-enciclopedia}). 

\end{itemize}
\end{remark}

\section{Proof of Proposition \ref{teorema equivalencia}}\label{secao teorema equivalencia}

In this section we construct homeomorphism that sends orbits of $Z=(X,Y)$, that has a continuous of topological cylinders filled by periodic orbits, to orbits of
$Z_0=(X_0,Y_0)$ given by \eqref{eq forma normal centro}.

Without loss of generality consider that orbits of $Z$ are oriented in an anti-clockwise sense.
Let $L$ (respectively, $\overline{L}$) be a set of two-fold singularity of $Z_0$ (respectively, $Z$) with length $\mathcal{R}_1>0$. By arc length parametrization we identify $L$ with $\overline{L}$. By $p$ (respectively, $\overline{p}$), we mark the line segment $\mathcal{S}_p$ (respectively, $\mathcal{S}_{\overline{p}}$) of length $\mathcal{R}_2$ orthogonal to $\Sigma$ (see Figure~\ref{sigmaequi}). This segment reaches a topological cylinder $\mathcal{M}$ of $Z_0$ (respectively, $\overline{\mathcal{M}}$ of $Z$) at a point $p^1$ (respectively, $\overline{p}^1$). 

In each point $\alpha \in L=[p,r]$ (respectively, $\overline{\alpha} \in \overline{L}$) mark the line segment $\mathcal{S}_\alpha$ orthogonal to $\Sigma$ (respectively, $\mathcal{S}_{\overline{\alpha}}$) with final point in $\mathcal{M}$ (respectively, $\overline{\mathcal{M}}$). Once $L$ and $\overline{L}$ are identified, identify each $S_\alpha$ with $\mathcal{S}_{\overline{\alpha}}$ by arc length parametrization. 

By the Implicit Function Theorem (abbreviated by IFT), there exists a smallest time $t_{1}< 0$ (respectively, $\overline{t}_{1} < 0$), depending on $p^1$ (respectively, $\overline{p}^1$), such that $\phi_{X_0}(p^1,t_{1}):= q \in \Sigma(+)$ (respectively, $\phi_{X}(\overline{p}^1,\overline{t}_{1}):=
\overline{q} \in \overline{\Sigma}(+)$), where $\Sigma(+)$ (respectively, $\overline{\Sigma}(+)$) is the set of all points of $\Sigma$ situated on the right of $L$ (respectively, $\overline{L}$) and $\phi_{W}$ denotes the flow of the vector field $W$. Identify the orbit arcs $\gamma_{q}^{p^1}(X_0)$ and $\gamma_{\overline{q}}^{\overline{p}^1}(X)$ of $X_0$ and $X$
with initial points $q$ and $\overline{q}$ and final points $p^1$ and $\overline{p}^1$, respectively, by arc length parametrization. Again  by IFT,  there exists a smallest time $t_{2}> 0$ (respectively, $\overline{t}_{2} > 0$), depending on $p^1$ (respectively, $\overline{p}^1$), such that $\phi_{X_0}(p^1,t_{2}):= q^{1} \in \Sigma(-)$ (respectively, $\phi_{X}(\overline{p}^1,\overline{t}_{2}):=
\overline{q}^{1} \in \overline{\Sigma}(-)$)
where $\Sigma(-)$ (respectively, $\overline{\Sigma}(-)$) is the set of all points of $\Sigma$ situated on the left of $L$ (respectively, $\overline{L}$). Identify the orbit arcs $\gamma_{p^1}^{q^1}(X_0)$ and $\gamma_{\overline{p}^1}^{\overline{q}^1}(X)$ of $X_0$ and $X$
with initial points $p^1$ and $\overline{p}^1$ and final points $q^1$ and $\overline{q}^1$, respectively, by arc length parametrization.


\begin{figure}[h!]
	\begin{center}
	\begin{overpic}[width=3.5in]{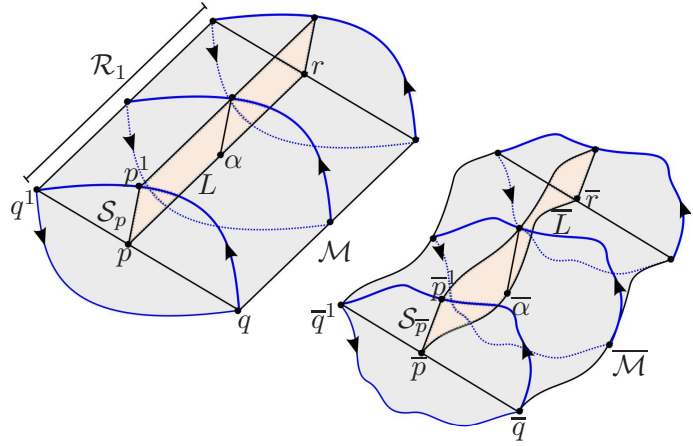}
\put(10,53){$\mathcal{R}_1$}\put(14,25){$p$}\put(58,8){$\overline{p}$}\put(11,31){$\mathcal{S}_p$}\put(56,15){$\mathcal{S}_{\overline{p}}$}\put(32,15){$q$}\put(73,-1){$\overline{q}$}\put(44,25){$\mathcal{M}$}\put(88,8){$\overline{\mathcal{M}}$}\put(26,35){$L$}\put(79,29){$\overline{L}$}\put(-2,32){$q^{1}$}\put(43,15){$\overline{q}^{1}$} \put(15,37){$p^1$}\put(61,20){$\overline{p}^1$}\put(43,53){$r$} 
\put(84,33){$\overline{r}$} 
\put(30,39){$\alpha$}\put(73,17){$\overline{\alpha}$}
		\end{overpic}
	\end{center}
	\caption{Topological cylinders.}\label{sigmaequi}
\end{figure}

Now, since $Z_0$ (respectively, $Z$) presents a continuous of topological cylinders, and
$L$ (respectively, $\overline{L}$) is an invisible $\Sigma$-fold set $Y_0$ (respectively, $Y$) by the IFT, there exists a smallest time $t_{3}
> 0$ (respectively, $\overline{t}_{3} > 0$), depending on $q^1$ (respectively, $\overline{q}^1$), such that $\phi_{Y_0}(q^1,t_{3}):= q\in \Sigma(+)$ (respectively, $\phi_{Y}(\overline{q}^1,\overline{t}_{3}):= \overline{q}\in \overline{\Sigma}(+)$). Identify the orbit arcs $\gamma_{q^1}^{q}(Y_0)$ and $\gamma_{\overline{q}^1}^{\overline{q}}(Y)$ of $Y_0$ and $Y$
with initial points $q^1$ and $\overline{q}^1$ and final points $q$ and $\overline{q}$, respectively, by arc length parametrization. 

Do the same for all point $\beta \in S_{\alpha}$ (resp., $\overline{\beta} \in S_{\overline{\alpha}}$), and for all $\alpha\in L$ (resp., $\overline{\alpha}\in \overline{L}$).

\section{A perturbation on the horizontal axis $-$ Proof of Theorem A}\label{secao prova teo A}

%
%
%
%
%

Now we consider a perturbation on the normal form \eqref{eq forma normal centro}  that keeps invariant the nested cylinders and exactly $\mathcal{L}$ planes of the form $\pi_i=\{(x,y,z) \, | \, x =  i \mu \}$, where $i\in \{0,1,2,\hdots,\mathcal{L}-1\}$ and $\mu>0$ is a small real number. In fact, consider
\begin{equation}\label{eq pert X}
\overline{X}_{\mathcal{L}}(x,y,z) = \left(
\begin{array}{c}
x (x - \mu) (x-2 \mu) \hdots (x-(\mathcal{L}-1) \mu) \\
0  \\
0
\end{array}
\right) = 
\end{equation}$$=\left(
\begin{array}{c}
\Pi _{i=0}^{\mathcal{L}-1} (x-i \mu) \\
0  \\
0
\end{array}
\right)$$and
\begin{equation}\label{eq forma normal Z com pert X}
Z_\mathcal{L}(x,y,z)= \left\{
\begin{array}{ccl}
X_\mathcal{L}(x,y,z)= & \left(
\begin{array}{c}
\lambda \Pi_{i=0}^{\mathcal{L}-1} (x-i \mu) \\
-1  \\
2 y
\end{array}
\right) 
& \hbox{if $z \geq 0$,} \\
Y_0(x,y,z) = & \left(
\begin{array}{c}
0 \\
1  \\
2 y
\end{array}
\right)& \hbox{if $z \leq 0$,}
\end{array}
\right.
\end{equation}where $X_\mathcal{L}(x,y,z)=X_0(x,y,z) + \lambda \overline{X}_{\mathcal{L}}(x,y,z)$, with $\lambda$ a sufficiently smal real number and $X_0$ given in \eqref{eq forma normal centro}.

\begin{remark}
 There is nothing special in the set $\{0,1,2,\hdots,\mathcal{L}-1\}$ of  sequential positive integers and we could take any set of $\mathcal{L}$ integers in the previous consideration. 
\end{remark}


\begin{proposition}\label{prop cilindros invariantes com pert X}
	The topological cylinders obtained in Proposition \ref{propiedades} are  $Z_\mathcal{L}$-invariant.
\end{proposition}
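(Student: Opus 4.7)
The plan is to show that the perturbation $\lambda\overline{X}_\mathcal{L}$ acts only in the $x$-direction and therefore leaves intact the $(y,z)$-mechanism that produces the cylinders. Since the cylinders of $Z_0$ arise from first integrals that depend only on $y$ and $z$, the very same sets should remain invariant under $Z_\mathcal{L}$.

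First I would verify that $H_2(x,y,z)=z+y^2$ is still a first integral of $X_\mathcal{L}$. Since $\nabla H_2=(0,2y,1)$ has vanishing first component, the added term contributes nothing and
$$X_\mathcal{L}\cdot\nabla H_2 \;=\; \lambda\, \Pi_{i=0}^{\mathcal{L}-1}(x-i\mu)\cdot 0 \,+\, (-1)(2y) \,+\, (2y)(1) \;=\; 0.$$
As $Y_0$ is unchanged, $L_2(x,y,z)=z-y^2$ remains a first integral of $Y_0$. From the parametrizations \eqref{eq param X0} and \eqref{eq param Y0} used to prove Proposition \ref{propiedades}, each topological cylinder is of the form $\mathcal{C}_a=\{z+y^2=a^2,\,z\geq 0\}\cup\{y^2-z=a^2,\,z\leq 0\}$ for some $a>0$, whose cross-section in the $(y,z)$-plane does not depend on $x$.

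Next I would argue invariance under the full PSVF. An orbit of $X_\mathcal{L}$ starting on $\mathcal{C}_a\cap\Sigma^+$ has the same $(y,z)$-projection as the corresponding orbit of $X_0$, because $X_\mathcal{L}$ and $X_0$ share their $y$- and $z$-components; hence it remains on the upper cap $\{z+y^2=a^2,\,z\geq 0\}$. Analogously, $Y_0$-orbits remain on the lower cap. Since the $z$-component of $X_\mathcal{L}$ is still $2y$, the fold curve $L=\{y=z=0\}$ and the invisible–invisible two-fold structure recorded in Remark \ref{obs posicao dobras} persist, so trajectories cross $\Sigma$ transversally outside $L$. Therefore any $Z_\mathcal{L}$-trajectory that meets $\mathcal{C}_a$ passes from the upper to the lower cap (and back) without leaving the cylinder.

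The one potentially delicate point — which turns out to be automatic — is to check that the perturbation does not create sliding or escaping regions on $\Sigma$ that would break the crossing dynamics on the cylinders. Because $\lambda\overline{X}_\mathcal{L}$ affects only the $x$-component and the contact of $X_\mathcal{L}$ with $\Sigma$ is governed entirely by its $z$-component, the tangency structure along $\Sigma$ is identical to that of $Z_0$, and the cylinders $\mathcal{C}_a$ are therefore invariant for $Z_\mathcal{L}$.
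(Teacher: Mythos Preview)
Your proof is correct, but it takes a genuinely different route from the paper's. You argue via first integrals: since the perturbation lives entirely in the $x$-component and $H_2=z+y^2$ has $\partial_x H_2=0$, the function $H_2$ remains a first integral of $X_\mathcal{L}$, and together with the unchanged $L_2$ this forces the cylinders $\mathcal{C}_a$ to be invariant as sets. The paper instead works with the return map on $\Sigma$: it observes that the positive half-return of $X_\mathcal{L}$ has the form $\varphi_{X_\mathcal{L}}(x,y)=(\varphi^1_{X_\mathcal{L}}(x),-y)$, so that composing with $\varphi_{Y_0}(x,y)=(x,-y)$ gives $\varphi_{Z_\mathcal{L}}(x,\beta,0)=(\varphi^1_{X_\mathcal{L}}(x),\beta,0)\in L_\beta$ for every $\beta$, i.e.\ each line $L_\beta\subset\Sigma$ is mapped into itself. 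Your argument is more geometric and self-contained; the paper's argument is terser but has the advantage of explicitly producing the decomposition $\varphi_{Z_\mathcal{L}}=(\varphi^1,\varphi^2)$ with $\varphi^2(y)=y$, which is exactly the object perturbed in Theorems~A, B and~C. Both rest on the same underlying fact---that the $(y,z)$-components of $X_\mathcal{L}$ coincide with those of $X_0$---but your presentation makes the invariance of the full surface transparent, while the paper's presentation feeds directly into the return-map machinery used downstream.
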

\begin{proof}
By Remark \ref{obs posicao dobras}, $L=\{(x,0,0) \, | \, x  \in \R \} \subset \Sigma$ is the curve of invisible fold singularities of both $X_0$ and $Y_0$.

The positive half-return map is
$\varphi_{X_\mathcal{L}}(x,y)=(\varphi_{X_\mathcal{L}}^1(x),-y)$ and the negative half-return map is
$\varphi_{Y_0}(x,y)=(x,-y).$ 	
For a fixed $\beta \in \R$, take $L_\beta=\{(x,\beta,0) \, | \, x  \in \R \} \subset \Sigma$ and let us saturate this straight line by the $Z_\mathcal{L}$-flow.
	In fact, for all $(x,\beta,0) \in L_{\beta}$ we get
\begin{equation}\label{eq invariancia}\begin{array}{cl} \varphi_{Z_{\mathcal{L}}}(x,\beta,0)&=	\varphi_{Y_{0}}(\varphi_{X_{\mathcal{L}}}(x,\beta,0))=(\varphi_{X_\mathcal{L}}^1(x),\beta,0)\in L_\beta.\end{array}\end{equation}
\end{proof}

\begin{proposition}\label{prop planos invariantes com pert X}
	The planes $\pi_i=\{(x,y,z) \, | \, x =  i \mu \}$, where $i\in \{0,1,2,\hdots,\mathcal{L}-1\}$, are $Z_\mathcal{L}$-invariant.
\end{proposition}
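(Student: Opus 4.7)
The plan is to show tangency of both smooth components $X_\mathcal{L}$ and $Y_0$ to each plane $\pi_i$, and then observe that the crossing dynamics through $\Sigma$ also preserves the $x$-coordinate, so that trajectories starting on $\pi_i$ never leave it.

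First I would examine $Y_0$. Since the first component of $Y_0(x,y,z)$ equals $0$ identically, the $x$-coordinate is constant along every trajectory of $Y_0$; in particular, $Y_0$ is tangent to every plane of the form $\{x=c\}$, and these planes are $Y_0$-invariant.

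Next I would examine $X_\mathcal{L}$. Its first component is
\[
\lambda\,\Pi_{j=0}^{\mathcal{L}-1}(x - j\mu),
\]
which vanishes identically whenever $x=i\mu$ for some $i\in\{0,1,\ldots,\mathcal{L}-1\}$, since the factor $(x-i\mu)$ then equals zero. Hence on $\pi_i$ the first component of $X_\mathcal{L}$ is zero, so $X_\mathcal{L}$ is tangent to $\pi_i$ at every point of $\pi_i\cap\Sigma^{+}$ and the plane $\pi_i$ is $X_\mathcal{L}$-invariant.

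Finally I would combine these two facts with the crossing mechanism. By Remark \ref{obs posicao dobras} the line $L\subset\Sigma$ consists of invisible two-folds, so trajectories of $Z_\mathcal{L}$ reach $\Sigma$ transversally and continue via the return map. From the proof of Proposition \ref{prop cilindros invariantes com pert X} we have $\varphi_{X_\mathcal{L}}(x,y)=(\varphi_{X_\mathcal{L}}^1(x),-y)$ and $\varphi_{Y_0}(x,y)=(x,-y)$; since the first components of $X_\mathcal{L}$ and $Y_0$ vanish on $\pi_i$, the $x$-coordinate is preserved by each half-return map starting on $\pi_i\cap\Sigma$, so $\varphi_{X_\mathcal{L}}^1(i\mu)=i\mu$. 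Combined with tangency in $\Sigma^{+}$ and $\Sigma^{-}$, this shows any trajectory of $Z_\mathcal{L}$ starting on $\pi_i$ remains on $\pi_i$ for all times, proving $Z_\mathcal{L}$-invariance. There is no real obstacle here: the argument reduces to observing the factorization of the first component of $X_\mathcal{L}$ and the vanishing first component of $Y_0$, the only subtlety being to record that the crossing through $\Sigma$ preserves $x$.
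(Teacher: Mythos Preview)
Your proof is correct and follows essentially the same approach as the paper: both argue that the first coordinate of $X_\mathcal{L}$ vanishes on $\pi_i$ (making $\pi_i$ $X_\mathcal{L}$-invariant) and that the first coordinate of $Y_0$ vanishes identically (making every plane $\{x=c\}$ $Y_0$-invariant), and then conclude $Z_\mathcal{L}$-invariance. Your additional paragraph on the crossing mechanism and the half-return maps is more careful than the paper's terse conclusion, but it is not a different strategy---just extra detail confirming that switching through $\Sigma$ does not move the $x$-coordinate.
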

\begin{proof}
Take $i=i_0$ fixed. When $x=i_0 \mu$ we get that the first coordinate of  $X_\mathcal{L}$ is null. As consequence,   the plane $\pi_{i_{0}}$ is $X_\mathcal{L}$-invariant. The same holds for all $i\in \{0,1,2,\hdots,\mathcal{L}-1\}$. On the other hand,  for all $c\in \R$, the plane $\pi_c=\{(x,y,z) \, | \, x =  c \}$ is $Y_0$-invariant. Therefore, each plane $\pi_i$ is $Z_\mathcal{L}$-invariant.
\end{proof}

\begin{proposition}\label{prop ZL com centro}
	The PSVF $Z_\mathcal{L}$ has a $\Sigma$-center in each plane $\pi_i$, where $i\in \{0,1,2,\hdots,\mathcal{L}-1\}$.  
\end{proposition}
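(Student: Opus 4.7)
The plan is to exploit the fact that the perturbation $\lambda\overline{X}_\mathcal{L}$ is carefully designed to vanish identically on each plane $\pi_i$, so that the dynamics of $Z_\mathcal{L}$ restricted to $\pi_i$ coincides with the dynamics of the unperturbed field $Z_0$ restricted to $\pi_i$. Since $Z_0$ already has a $\Sigma$-center on every plane $\{x=M\}$ by Proposition~\ref{propiedades}, the conclusion for $Z_\mathcal{L}$ will follow for free once this reduction is made.

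Fix $i\in\{0,1,\ldots,\mathcal{L}-1\}$. First, I would invoke Proposition~\ref{prop planos invariantes com pert X} to guarantee that $\pi_i=\{x=i\mu\}$ is $Z_\mathcal{L}$-invariant, so it makes sense to speak of the restricted PSVF $Z_\mathcal{L}\rvert_{\pi_i}$. Next, I would observe that when $x=i\mu$, the factor $(x-i\mu)$ in the product $\Pi_{j=0}^{\mathcal{L}-1}(x-j\mu)$ vanishes, hence the first component of $X_\mathcal{L}$ is identically zero on $\pi_i$. Writing out the remaining components, one sees $X_\mathcal{L}\rvert_{\pi_i}=(0,-1,2y)=X_0\rvert_{\pi_i}$. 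The field $Y_0$ was never perturbed, so trivially $Z_\mathcal{L}\rvert_{\pi_i}=Z_0\rvert_{\pi_i}$ as a PSVF on the two-dimensional invariant manifold $\pi_i$.

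Now I would quote Proposition~\ref{propiedades}, which states that $Z_0$ has a $\Sigma$-center in every plane $\pi_M$, realized in $\pi_i$ by the point $q_i=(i\mu,0,0)\in L\subset\Sigma$ together with the codimension-one manifold $\mathcal{S}=\pi_i$: a full neighborhood of $q_i$ in $\pi_i$ is foliated by closed orbits of $Z_0$ with consistent orientation, as can be read off from the parametrizations \eqref{eq param X0}--\eqref{eq param Y0} and the return map $\varphi_{Z_0}(x,y)=(x,y)$. Since on $\pi_i$ the flow of $Z_\mathcal{L}$ coincides orbit-by-orbit with that of $Z_0$, the same family of periodic orbits sits inside $\pi_i$ for $Z_\mathcal{L}$, with the same orientation. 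Therefore $q_i$ satisfies Definition~\ref{definicao centro} also for $Z_\mathcal{L}$, witnessed by the codimension-one manifold $\mathcal{S}=\pi_i$ transverse to $\Sigma$.

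The only small point requiring care is verifying that $\mathcal{S}=\pi_i$ is genuinely transverse to $\Sigma=\{z=0\}$, which is immediate since $\pi_i\cap\Sigma=L_i:=\{(i\mu,y,0)\mid y\in\R\}$ is a regular one-dimensional submanifold in the intersection. With this, no further analysis is needed, and the proposition follows for every $i\in\{0,1,\ldots,\mathcal{L}-1\}$ in one stroke. I do not foresee any real obstacle: the whole point of the explicit form of $\overline{X}_\mathcal{L}$ is precisely to make this reduction to $Z_0$ on each $\pi_i$ tautological.
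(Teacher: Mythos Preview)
Your proof is correct. The paper's proof is a single sentence: it combines Proposition~\ref{prop cilindros invariantes com pert X} (the nested topological cylinders are $Z_\mathcal{L}$-invariant) with Proposition~\ref{prop planos invariantes com pert X} (each plane $\pi_i$ is $Z_\mathcal{L}$-invariant), so that each invariant cylinder meets each $\pi_i$ in an invariant closed curve, producing the one-parameter family of periodic orbits required by Definition~\ref{definicao centro}. You take a slightly different but equally short route: instead of appealing to the invariance of the cylinders, you observe directly that $Z_\mathcal{L}\rvert_{\pi_i}=Z_0\rvert_{\pi_i}$ and then import the $\Sigma$-center structure from Proposition~\ref{propiedades}. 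Both arguments are essentially immediate; yours bypasses Proposition~\ref{prop cilindros invariantes com pert X} entirely and makes the reduction to $Z_0$ explicit, while the paper's version foregrounds the geometric picture of intersecting invariant sets that is reused later in the proof of Theorem~C.
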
 
\begin{proof}
	The proof is straighforward. Is enough to combine Propositions \ref{prop cilindros invariantes com pert X} and \ref{prop planos invariantes com pert X}.
\end{proof}

\begin{proposition}\label{prop estabilidade centros} 

		
		
		
	When $i$ is even (resp. odd) the $\Sigma$-center $\pi_{i}$ behaves like a unstable (resp. stable) center manifold where $i\in \{0,1,2,\hdots,\mathcal{L}-1\}$.
\end{proposition}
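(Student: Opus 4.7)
The plan is to decide the transverse stability of each invariant plane $\pi_i$ by linearising the first return map of $Z_\mathcal{L}$ on $\Sigma$ in the direction normal to $\pi_i$, that is, along the $x$-axis. By Proposition \ref{prop planos invariantes com pert X} the plane $\pi_i$ is invariant, and on $\pi_i$ the dynamics is a copy of the center family of $Z_0$, so only the transverse behaviour is in question.

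To write this return map down explicitly I would use that the first coordinate of $X_\mathcal{L}$ is $\lambda P(x)$ with $P(x)=\prod_{j=0}^{\mathcal{L}-1}(x-j\mu)$, which depends on $x$ alone. Thus $x$ evolves in $\Sigma^+$ according to the scalar equation $\dot x = \lambda P(x)$, while $(y,z)$ obey exactly the unperturbed model; in particular the $X_\mathcal{L}$-return time from $(x_0,y_0,0)$ with $y_0>0$ is $2y_0$, independent of $x_0$ and $\lambda$. Since $Y_0$ leaves $x$ unchanged in $\Sigma^-$, the full return map reads $\varphi_{Z_\mathcal{L}}(x_0,y_0,0)=\bigl(\phi^x_\lambda(x_0,2y_0),\,y_0,\,0\bigr)$, where $\phi^x_\lambda(\cdot,t)$ denotes the flow of $\dot x = \lambda P(x)$. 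As $i\mu$ is a fixed point of this scalar equation, the multiplier in the $x$-direction at $(i\mu,y_0,0)$ equals $\exp\!\bigl(2\lambda y_0\,P'(i\mu)\bigr)$.

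A direct computation gives $P'(i\mu)=\mu^{\mathcal{L}-1}\,i!\,(\mathcal{L}-1-i)!\,(-1)^{\mathcal{L}-1-i}$, so the sign of $P'(i\mu)$ alternates with $i$. Choosing $\lambda$ so that $\lambda P'(0)>0$ — equivalently $\mathrm{sign}(\lambda)=(-1)^{\mathcal{L}-1}$ — makes $\lambda P'(i\mu)$ positive precisely when $i$ is even, so the multiplier exceeds $1$ for even $i$ and lies below $1$ for odd $i$, which is the claimed alternating pattern. The main obstacle I expect is not the linearisation itself but the sign bookkeeping together with a careful reading of the statement: the return map at a fixed point on $\pi_i$ is non-hyperbolic, because the in-plane family of periodic orbits forces one eigenvalue to equal $1$, so ``unstable (resp.\ stable) center manifold'' has to be interpreted as meaning that $\pi_i$ is normally repelling (resp.\ attracting).
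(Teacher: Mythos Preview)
Your approach coincides with the paper's: both reduce the transverse stability of $\pi_i$ to the scalar equation $\dot x=\lambda P(x)$, $P(x)=\prod_{j=0}^{\mathcal{L}-1}(x-j\mu)$, and decide the question by the sign of $P'(i\mu)$. You are more quantitative, computing the exact return time $2y_0$ and the transverse multiplier $\exp\bigl(2\lambda y_0\,P'(i\mu)\bigr)$, while the paper argues qualitatively from the phase line of $\dot x=P(x)$.

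One point where you are more careful than the paper: your formula $P'(i\mu)=(-1)^{\mathcal{L}-1-i}\mu^{\mathcal{L}-1}i!\,(\mathcal{L}-1-i)!$ shows that the sign of $P'(i\mu)$ alternates with $i$ but its anchor depends on the parity of $\mathcal{L}$. The paper asserts flatly that $P'(i\mu)>0$ for $i$ even (and drops $\lambda$ in the displayed ODE), which as written is only correct for odd $\mathcal{L}$; your device of choosing $\mathrm{sign}(\lambda)=(-1)^{\mathcal{L}-1}$ is the right fix to make the stated conclusion hold for every $\mathcal{L}$. Your closing remark that the in-plane eigenvalue equals $1$, so that ``unstable/stable center manifold'' must be read as normally repelling/attracting, is also apt and matches how the result is used later.
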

\begin{proof}
From Proposition \ref{prop planos invariantes com pert X} the planes $\pi_i=\{(x,y,z) \, | \, x =  i \mu \}$ are $Z_\mathcal{L}$-invariant. As stated in Equation \ref{eq invariancia}, we get
$\varphi_{Z_\mathcal{L}}(x,y,0)=(\varphi_{X_\mathcal{L}}^1(x),y,0)$. As consequence, the behavior of the complete return map is determined by $\varphi_{X_\mathcal{L}}^1(x)$. So, let us consider the first coordinate of $X_\mathcal{L}$, i.e., let us consider the differential equation
\begin{equation}\label{eqx}
\dot{x}=\Pi_{i=0}^{\mathcal{L}-1} (x-i \mu).
\end{equation}
Note that each $x=i\mu$, $i\in \{0,1,2,\hdots,\mathcal{L}-1\}$, is a solution of \eqref{eqx} and 
\[
\dfrac{d}{dx}\Pi_{i=0}^{\mathcal{L}-1} (x-i \mu)|_{x=i\mu}
\]
is positive for $i$ even and negative for $i$ odd.
The behavior in each solution is given in the Figure \ref{grafico}.

\begin{figure}[h!]
	\begin{center}
	\begin{overpic}[width=5in]{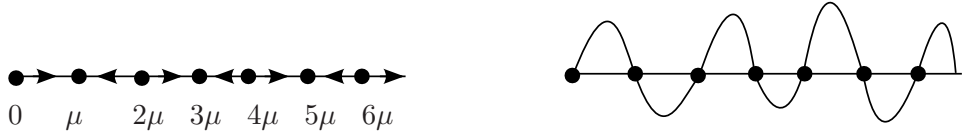}
\put(0,0){$0$}\put(6,0){$\mu$}\put(13,0){$2\mu$}\put(19,0){$3\mu$}\put(25,0){$4\mu$}\put(31,0){$5\mu$}\put(37,0){$6\mu$}
		\end{overpic}
	\end{center}
	\caption{ The phase portrait of \eqref{eqx} and the graph of $y=\Pi_{i=0}^{\mathcal{L}-1} (x-i \mu)$.}\label{grafico}
\end{figure}


Thus,	when $i$ is even (resp. odd) the $\Sigma$-center $\pi_{i}$ behaves like a unstable (resp. stable) center manifold, where $i\in \{0,1,2,\hdots,\mathcal{L}-1\}$.

\end{proof}

\begin{proof}[Proof of Theorem A]
	The Propositions \ref{prop planos invariantes com pert X}, \ref{prop ZL com centro} and \ref{prop estabilidade centros} prove Theorem A. 
\end{proof}

\section{A perturbation of the continuum of cylinders $-$ Proof of Theorem B}\label{secao prova teo B}

In order to prove Theorem B we need some lemmas. Observe that both vector fields $X_0$ and $Y_0$ in the normal form \eqref{eq forma normal centro} are written as $W(x,y,z)=(0,\pm 1, g(y))$ (particularly, $g(y)= 2y$ in such expression). Next lemma gives how are the trajectories of such systems.
\begin{lemma}\label{lema trajetorias}
The trajectories of a vector field $W(x,y,z)=(0,1,g(y))$, in each plane $\pi_c=\{(x,y,z) \, | \, x =c \, ; \, c \in \R  \}$, are obtained by vertical translations of the graph of $G(y)$, where $\frac{\partial }{\partial y}G(y)=g(y)$.
\end{lemma}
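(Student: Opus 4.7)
The plan is to read off the trajectories directly from the defining ODE system by exploiting the fact that $\dot y \equiv 1$ is nowhere zero, so we can use $y$ as a parameter along every trajectory.

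First I would note that the system $\dot q = W(q)$ reads
\begin{equation*}
\dot x = 0, \qquad \dot y = 1, \qquad \dot z = g(y).
\end{equation*}
The first equation gives $x(t)\equiv x(0)$, so every trajectory is confined to the plane $\pi_c$ with $c = x(0)$; this justifies restricting attention to a single such plane. The second equation yields $y(t) = t + y(0)$, a diffeomorphism between time and the $y$-coordinate.

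Next I would use $y$ as the independent variable on the trajectory. By the chain rule,
\begin{equation*}
\frac{dz}{dy} \;=\; \frac{\dot z}{\dot y} \;=\; g(y),
\end{equation*}
so integrating gives $z(y) = G(y) + C$, where $G$ is any antiderivative of $g$ and $C\in\mathbb{R}$ is a constant of integration determined by the initial condition $(y(0),z(0))$, namely $C = z(0) - G(y(0))$. Thus, viewed inside the plane $\pi_c$ with coordinates $(y,z)$, each trajectory is precisely the graph $z = G(y) + C$, which is a vertical translate (in the $z$-direction) of the graph of $G$. Varying the initial condition over $\pi_c$ sweeps out all such translates, proving the statement.

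I do not foresee any real obstacle here: the only point worth being careful about is that $\dot y$ never vanishes, which is what legitimately allows the reduction from a time-parametrization to a graph over $y$ and thereby guarantees that each trajectory is a full graph rather than a vertical curve with turning points. The identical argument applies to $W(x,y,z)=(0,-1,g(y))$ by reversing time, which is the form actually needed for $X_0$ in \eqref{eq forma normal centro}.
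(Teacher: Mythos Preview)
Your proof is correct and follows essentially the same approach as the paper: both integrate the decoupled system $\dot x=0$, $\dot y=1$, $\dot z=g(y)$ to obtain trajectories of the form $(c,\,y,\,G(y)+C)$ inside each plane $\pi_c$. The only cosmetic difference is that you pass directly to $y$ as the independent variable via $dz/dy=g(y)$, whereas the paper integrates in $t$ and then substitutes $u=t+c_2$.
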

\begin{proof}
Since $W(x,y,z)=(\dot{x}, \dot{y},\dot{z})=(0,1, g(y)) \in \chi^r$ we obtain that \[x(t)=c_1, \mbox{ } y(t)=t+c_2 \mbox{ and } z(t) = \int g(t+c_2) dt= G(t+c_2)+c_3,\]where $c_1,c_2,c_3 \in \R$ and $G$ is a primitive of $g$. Now, take $u=t+c_2$ and the trajectories of $W(x,y,z)$ are given by $(c_1,u,G(u) + c_3)$ which in each plane $\pi_{c_{1}}=\{(x,y,z) \, | \, x =c_1  \}$, are vertical translations of the graph of $G(u)$.
\end{proof}
Observe that an analogous result is obtained with $W(x,y,z)=(0,-1,g(y))$.

In what follows, $h : \R \rightarrow \R$ will denote the $C^\infty$-function given by
\[h(y)=\left\{\begin{array}{ll}
0, & \mbox{ if }y\leq0,\\
e^{-1/y}, &\mbox{ if }y>0.
\end{array}\right.\]
\begin{lemma}\label{lema finitos pontos fixos trajetorias}
Consider the  function $\xi_\varepsilon^f(y)= \varepsilon h(y)
(\varepsilon-y)(2\varepsilon-y)\dots(k\varepsilon-y)$.
\begin{enumerate}
\item[(i)] If $\varepsilon<0$ then $\xi_\varepsilon^f$ does not have roots in $(0,+\infty)$.
\item[(ii)] If $\varepsilon>0$ then $\xi_\varepsilon^f$
has exactly $k$ roots in $(0,+\infty)$, these roots are
$\{\varepsilon,2\varepsilon\dots,k\varepsilon\}$ and
$\displaystyle\frac{\partial \xi_\varepsilon^f}{\partial
y}(j\varepsilon)=(-1)^j\varepsilon^kh(j\varepsilon)(k-j)! (j-1)!
\mbox{ for }j\in\{1,2,\dots,k\}.$ It means that the derivative at
the root $j\varepsilon$ is positive for $j$ even and negative for $j$
odd.
\end{enumerate}
\end{lemma}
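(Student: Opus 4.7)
My plan rests on the observation that on $(0,+\infty)$ the flat-bump factor $h(y)=e^{-1/y}$ is strictly positive, so the zero set of $\xi_\varepsilon^f$ in $(0,+\infty)$ coincides with the zero set of the polynomial factor $P_\varepsilon(y) = \varepsilon\,(\varepsilon-y)(2\varepsilon-y)\cdots(k\varepsilon-y)$, whose roots are exactly $\{\varepsilon,2\varepsilon,\dots,k\varepsilon\}$. All three claims follow by analyzing the location and simplicity of these roots, and by applying Leibniz's rule at each of them.

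For item (i), if $\varepsilon<0$ then $i\varepsilon<0$ for every $i\in\{1,\dots,k\}$, so each factor $(i\varepsilon-y)$ is strictly negative on $(0,+\infty)$; combined with $h(y)>0$ and $\varepsilon\ne 0$ this gives $\xi_\varepsilon^f(y)\ne 0$ for all $y>0$.

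For item (ii), with $\varepsilon>0$ the values $\varepsilon,2\varepsilon,\dots,k\varepsilon$ are $k$ distinct points of $(0,+\infty)$, each a simple root of $P_\varepsilon$; since $h>0$ on $(0,+\infty)$, they are precisely the roots of $\xi_\varepsilon^f$ there and nothing else contributes. To compute the derivative at a root $y=j\varepsilon$, I apply the product rule to $\xi_\varepsilon^f(y)=\varepsilon h(y)\prod_{i=1}^k(i\varepsilon-y)$: every term that still contains the factor $(j\varepsilon-y)$ vanishes at $y=j\varepsilon$, so the only surviving contribution comes from differentiating that single factor, yielding
\[
\frac{\partial \xi_\varepsilon^f}{\partial y}(j\varepsilon) \;=\; -\,\varepsilon\, h(j\varepsilon)\prod_{i\ne j}(i\varepsilon-j\varepsilon).
\]
Then I factor $\varepsilon$ out of each of the $k-1$ remaining binomials to get $\varepsilon^{k-1}\prod_{i\ne j}(i-j)$, and split that last product as $\prod_{i=1}^{j-1}(i-j)\cdot\prod_{i=j+1}^{k}(i-j)=(-1)^{j-1}(j-1)!(k-j)!$. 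Assembling the signs gives $(-1)^j\varepsilon^k h(j\varepsilon)(j-1)!(k-j)!$, exactly the formula claimed; the parity of this expression in $j$ then yields the final sign statement about the derivative.

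The only delicate point is sign bookkeeping in the final computation (three sign sources: the $-1$ from $\partial_y(j\varepsilon-y)$, the $(-1)^{j-1}$ from the rearrangement $\prod_{i<j}(i-j)$, and the overall factors of $\varepsilon$); there is no conceptual obstacle, as the flatness of $h$ at the origin is used only to keep $h$ nonnegative and the integer roots simple.
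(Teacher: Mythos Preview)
Your proof is correct and follows essentially the same approach as the paper: both arguments use that $h(y)>0$ on $(0,+\infty)$ to reduce the zero set to that of the polynomial factor, and both compute the derivative at $y=j\varepsilon$ by isolating the factor $(j\varepsilon-y)$ so that only one term of the product rule survives. The sign bookkeeping you carry out matches the paper's computation line for line.
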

\begin{proof}
When $y>0$, by a straightforward calculation $\xi_\varepsilon^f(y)=0$
if, and only if, $(\varepsilon-y)(2\varepsilon-y)\dots(k\varepsilon-y)=0$.
So, the roots of $\xi_\varepsilon^f(y)$ in $(0,+\infty)$  are
$\varepsilon,2\varepsilon,\dots,k\varepsilon$. Moreover,
$$\displaystyle\frac{\partial \xi_\varepsilon^f}{\partial y}(y)=
 \displaystyle\frac{\partial }{\partial y}\Big((j \varepsilon - y) H(y) \Big) =
 (j \varepsilon - y) \displaystyle\frac{\partial H}{\partial y}(y) -  H(y),$$ where $H(y) = \xi_\varepsilon^f(y)/(j \varepsilon -
 y)$. So,
$$\begin{array}{ccc}
  \displaystyle\frac{\partial \xi_\varepsilon^f}{\partial y}(j \varepsilon) & = & -  H(j \varepsilon) = \varepsilon^{k} h(j \varepsilon) (1-j)\dots((j-1)-j )((j+1)-j)\dots(k-j) \\
    & = & \varepsilon^{k} h(j \varepsilon) (-1)^{j} \Big((j-1)\dots(j- (j-1))\Big) \Big(((j+1)-j)\dots(k-j)\Big) \\
    & = & (-1)^j\varepsilon^kh(j\varepsilon)(k-j)! (j-1)! \\
 \end{array}$$
This proves item (ii). Item (i) follows immediately.
\end{proof}
\begin{lemma}\label{lema infinitos pontos fixos trajetorias}
Consider the  function $\xi_\varepsilon^i(y)= - h(y) \sin(\pi \varepsilon^2 /y)$.  For $\varepsilon\neq0$ the function $\xi_\varepsilon^i$ has infinity many
roots in $(0,\varepsilon^2)$, these roots are $\{\varepsilon^2,\varepsilon^2/2,\varepsilon^2/3,\dots\}$ and
$$\displaystyle\frac{\partial \xi_\varepsilon^i}{\partial
y}(\varepsilon^2/j)=(-1)^{j+1}  (-\pi j^2/\varepsilon^2)h(\varepsilon^2/j) \mbox{ for
}j\in\{1,2,3,\dots\}.$$ It means that the derivative at the root $\varepsilon^2/j$
is positive for $j$ even and negative for $j$ odd.
\end{lemma}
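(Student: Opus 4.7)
The plan is a direct computation, mirroring the structure of Lemma \ref{lema finitos pontos fixos trajetorias} but with $\sin$ replacing the finite product. First I would locate the zeros of $\xi_\varepsilon^i$ on $(0,+\infty)$. Since $h(y)=e^{-1/y}>0$ for $y>0$, the zeros of $\xi_\varepsilon^i(y)=-h(y)\sin(\pi\varepsilon^2/y)$ coincide with those of $\sin(\pi\varepsilon^2/y)$. Solving $\pi\varepsilon^2/y=j\pi$ for positive integer $j$ gives $y=\varepsilon^2/j$, and the sequence $\{\varepsilon^2,\varepsilon^2/2,\varepsilon^2/3,\dots\}$ accumulates at $0$, producing the claimed infinite set of roots in $(0,\varepsilon^2]$.

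Second, I would differentiate using the product and chain rules to obtain
$$\frac{\partial \xi_\varepsilon^i}{\partial y}(y) = -h'(y)\sin\!\left(\frac{\pi\varepsilon^2}{y}\right) + \frac{\pi\varepsilon^2}{y^{2}}\,h(y)\cos\!\left(\frac{\pi\varepsilon^2}{y}\right).$$
Evaluating at $y=\varepsilon^2/j$ collapses this expression dramatically: the first summand vanishes because $\sin(\pi j)=0$, while $\cos(\pi j)=(-1)^j$. Substituting yields
$$\frac{\partial \xi_\varepsilon^i}{\partial y}\!\left(\frac{\varepsilon^2}{j}\right) = (-1)^{j}\,\frac{\pi j^{2}}{\varepsilon^{2}}\,h\!\left(\frac{\varepsilon^2}{j}\right),$$
which is exactly the stated $(-1)^{j+1}(-\pi j^{2}/\varepsilon^{2})h(\varepsilon^2/j)$. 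Because $h(\varepsilon^2/j)>0$, the sign of the derivative equals $(-1)^{j}$: positive for $j$ even and negative for $j$ odd, as required.

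There is essentially no obstacle here; the argument is a routine calculation whose only subtlety is the mutual cancellation that occurs at zeros of $\sin$, where the companion $\cos$ takes the value $\pm 1$ and absorbs all the information about orientation. I would emphasize in the writeup that the accumulation of roots at $y=0$ is precisely the mechanism by which Theorem B will be able to produce \emph{infinitely many} isolated invariant cylinders (the case $k=\infty$), in contrast with the polynomial setting of Lemma \ref{lema finitos pontos fixos trajetorias} which is inherently finite.
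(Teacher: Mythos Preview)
Your argument is correct and follows essentially the same route as the paper: identify the zeros via $h(y)>0$ and $\sin(\pi\varepsilon^2/y)=0$, differentiate by the product and chain rules, and evaluate at $y=\varepsilon^2/j$ using $\sin(\pi j)=0$ and $\cos(\pi j)=(-1)^j$. The only cosmetic difference is that the paper writes the derivative as $-h'(y)\sin(\pi\varepsilon^2/y)-h(y)\cos(\pi\varepsilon^2/y)(-\pi\varepsilon^2/y^2)$ before evaluating, which is algebraically identical to your expression.
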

\begin{proof}
When $y>0$, by a straightforward calculation $\xi_\varepsilon^i(y)=0$
if, and only if, $\sin(\pi \varepsilon^2/y)=0$. So, the roots of
$\xi_\varepsilon^i(y)$ in $(0,\varepsilon^2)$  are $\varepsilon^2,\varepsilon^2/2,\varepsilon^2/3,\dots$. Moreover,
$$\displaystyle\frac{\partial \xi_\varepsilon^i}{\partial y}(y)=
- h^{\prime}(y) \sin(\pi \varepsilon^2/y)  - h(y)\cos(\pi \varepsilon^2/y)(-\pi\varepsilon^2/y^2).$$ So,
$$\begin{array}{rcl}
  \displaystyle\frac{\partial \xi_\varepsilon^i}{\partial y}(\varepsilon^2/j) & = & -h^{\prime}(\varepsilon^2/j) \sin(\pi j)  - h(\varepsilon^2/j)\cos(\pi j)(-\pi j^2/\varepsilon^2) \\
&    = & (-1)^{j+1} (-\pi j^2/\varepsilon^2) h(\varepsilon^2/j). \\
 \end{array}$$
\end{proof}
Since $h$ is a C$^\infty$-function, the functions $\xi_\varepsilon^f(y)$ in
Lemma \ref{lema finitos pontos fixos trajetorias}  and $\xi_\varepsilon^i(y)$ in Lemma \ref{lema infinitos pontos fixos trajetorias} are C$^{\infty}$-functions. So
$Z_{\varepsilon}^{\rho} \in \Omega$ given by
\begin{equation}\label{eq centro perturbado}
Z^{\rho}_{\varepsilon}(x,y,z) = \left\{
      \begin{array}{ll}
        X_0(x,y,z) = \left(
              \begin{array}{c}
              0\\
                   -1 \\
               2y
               \end{array}
      \right)
 & \hbox{if $z \geq 0$,} \\
         Y^{\rho}_{\varepsilon}(x,y,z) = \left(
              \begin{array}{c}
              0\\
               1 \\
               2y + \frac{\partial \xi_\varepsilon^\rho}{\partial y} (y)
\end{array}
      \right)& \hbox{if $z \leq 0$,}
      \end{array}
    \right.
\end{equation}where either $\rho=f$ or $\rho =i$, is a small C$^{\infty}$-perturbation of $Z_0$ given by \eqref{eq forma normal centro} when $\varepsilon$ is sufficiently small. Moreover,
\begin{equation}\label{convergencia}
\displaystyle\lim_{\varepsilon \rightarrow 0} Z^{\rho}_{\varepsilon} = Z_0.
\end{equation}
\begin{lemma}\label{lema primeiro retorno}Let
$\varphi_{Z^{\rho}_{\varepsilon}}(x,y)=(\varphi_{Z^{\rho}_{\varepsilon}}^1(x),\varphi_{Z^{\rho}_{\varepsilon}}^2(y))$ be the return map of $Z^{\rho}_{\varepsilon}$ where either
$\rho=f$ or $\rho=i$. For all $y>0$ we have that
\[ y^2-(\varphi_{Z^{\rho}_{\varepsilon}}^2(y))^2-\xi_\varepsilon^\rho(\varphi_{Z^{\rho}_{\varepsilon}}^2(y))=0.\]
\end{lemma}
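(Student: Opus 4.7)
My plan is to directly compute the composition $\varphi_{Y^{\rho}_{\varepsilon}} \circ \varphi_{X_0}$ by integrating the trajectories, exploiting the fact that $X_0$ is unchanged from the normal form so its half-return is already known. From Proposition \ref{propiedades}, the positive half-return map of $X_0$ is $\varphi_{X_0}(x,y) = (x,-y)$. Hence, starting from $(x,y,0)$ with $y>0$, we need only follow the $Y^{\rho}_{\varepsilon}$ trajectory issuing from $(x,-y,0)$ until it first returns to $\Sigma$.

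For the $Y^{\rho}_{\varepsilon}$-piece I would invoke the analogue of Lemma \ref{lema trajetorias} for vector fields of the form $(0,1,g(y))$. Here $g(y) = 2y + \frac{\partial \xi_{\varepsilon}^{\rho}}{\partial y}(y)$, so a primitive is
\[
G(y) = y^{2} + \xi_{\varepsilon}^{\rho}(y),
\]
and each trajectory in the plane $x=c$ is a vertical translate of the graph of $G$. The trajectory through $(x,-y,0)$ therefore satisfies
\[
z(t) = G\bigl(y(t)\bigr) - G(-y).
\]
The decisive observation is that $h$ vanishes on $(-\infty,0]$, so $\xi_{\varepsilon}^{\rho}(-y)=0$ for $y>0$, giving $G(-y)=y^{2}$. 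Thus along this trajectory,
\[
z(t) = y(t)^{2} + \xi_{\varepsilon}^{\rho}\bigl(y(t)\bigr) - y^{2}.
\]

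The return to $\Sigma$ occurs at the first $t^{*}>0$ where $z(t^{*})=0$. Writing $y^{*}:=y(t^{*})$, this is precisely
\[
y^{2} - (y^{*})^{2} - \xi_{\varepsilon}^{\rho}(y^{*}) = 0.
\]
Since the composition gives $\varphi_{Z^{\rho}_{\varepsilon}}(x,y) = (\varphi^{1}_{Z^{\rho}_{\varepsilon}}(x),\, y^{*})$, we identify $y^{*} = \varphi^{2}_{Z^{\rho}_{\varepsilon}}(y)$ and the stated identity follows.

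The only subtle point, and the one I would make sure to address explicitly, is that such a first return time $t^{*}>0$ really exists and lies in the region $\{z\le 0\}$ throughout. Because $G$ is continuous, $G(-y)=y^{2}>0=G(0)$, and $G(y)\to +\infty$ as $y\to+\infty$, the intermediate value theorem produces a value $y^{*}>0$ with $G(y^{*}) = y^{2}$; smallness of $\varepsilon$ ensures $\xi_{\varepsilon}^{\rho}$ is a small perturbation so the trajectory remains in $\Sigma^{-}$ before reaching $y^{*}$ (with $y^{*}$ close to $y$), and hence $t^{*}$ is indeed the smallest positive return time, as required by the definition of $\varphi^{2}_{Z^{\rho}_{\varepsilon}}$.
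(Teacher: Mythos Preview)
Your argument is correct and follows essentially the same route as the paper: compute $\varphi_{X_0}(x,y)=(x,-y)$, then use Lemma~\ref{lema trajetorias} to write the $Y^{\rho}_{\varepsilon}$-trajectory through $(x,-y,0)$ as a vertical translate of $G(y)=y^{2}+\xi_{\varepsilon}^{\rho}(y)$ and read off the defining equation for the next intersection with $\Sigma$. You are in fact a bit more explicit than the paper in two places---you spell out why $\xi_{\varepsilon}^{\rho}(-y)=0$ (via $h|_{(-\infty,0]}\equiv 0$), and you justify the existence of the first return time---both of which the paper leaves implicit.
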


\begin{proof}
Let $(x_0,y_0,0)\in\Sigma$. According to Lemma \ref{lema trajetorias}, in each plane $\pi_{x_0}=\{(x,y,z) \, | \, x =x_0\}$, the trajectories of $X_0$ are the graphs of $F_c(y)=-y^2+c$ for $c\in\R$. The constant $c\in\R$ that satisfy $F_c(y_0)=0$ is $c=y_0^2$. The parabola
$z=-y^2+y_0^2$ in the plane $\pi_{x_0}$ intersects the plane $z=0$ at the points $(x_0,y_0,0)$ and $(x_0,-y_0,0)$. So, $\varphi_{X_0}(x_0,y_0)=(x_0,-y_0)$ and thus $\varphi_{X_0}^2(y_0)=-y_0$. Again by Lemma \ref{lema trajetorias}, in each plane $\pi_{x_0}$, the trajectories of
$Y^\rho_\varepsilon$ are the graphs of
$G_c(y)=y^2+\xi_\varepsilon^\rho(y)+c$ for $c \in \R$. The constant $c \in\R$ that satisfy $G_c(-y_0)=0$ is $c = -y_0^2$. So, in the plane $\pi_{x_0}$, the first
return $\varphi_{Y^\rho_\varepsilon}^2(-y_0)$ is the first coordinate of the
point in $\Sigma$ given by the intersection of the graph of the
function $z=G(y)=y^2+\xi_\varepsilon^\rho(y)-y_0^2$ with the plane $z=0$. So
$\varphi_{Z^{\rho}_{\varepsilon}}^2(y)$ satisfies
\begin{equation}\label{eq1}
y^2-(\varphi_{Z^{\rho}_{\varepsilon}}^2(y))^2-\xi_\varepsilon^\rho(\varphi_{Z^{\rho}_{\varepsilon}}^2(y))=0,
\end{equation}where either
$\rho=f$ or $\rho=i$.
\end{proof}

\begin{lemma}\label{lema derivada aplicacao retorno}
Let $\varphi_{Z^{f}_{\varepsilon}}^2$ be the second component of return map of $Z^{f}_{\varepsilon}$. Then $y>0$ is a fixed point of $\varphi_{Z^{f}_{\varepsilon}}^2$ if, and only if,
$y=j\varepsilon$ for $j=1,2,\dots,k$. Moreover, for $j$ even
$(\varphi_{Z^{f}_{\varepsilon}}^2)^\prime(j\varepsilon)<1$ and for $j$ odd
$(\varphi_{Z^{f}_{\varepsilon}}^2)^\prime(j\varepsilon)>1$. \end{lemma}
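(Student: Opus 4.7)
The plan is to exploit the implicit identity established in Lemma \ref{lema primeiro retorno}. Writing $\psi := \varphi_{Z^{f}_{\varepsilon}}^2$ for brevity, that identity reads
\[
F(y,\psi(y)) := y^2 - \psi(y)^2 - \xi_\varepsilon^f(\psi(y)) = 0, \qquad y>0.
\]
The fixed-point characterization will be immediate: substituting $\psi(y)=y$ collapses the identity to $\xi_\varepsilon^f(y)=0$, and since $\varepsilon>0$, Lemma \ref{lema finitos pontos fixos trajetorias}(ii) pinpoints the positive roots of $\xi_\varepsilon^f$ as exactly $\{\varepsilon, 2\varepsilon, \ldots, k\varepsilon\}$, which yields the first assertion.

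For the derivative comparison I would differentiate $F(y,\psi(y))=0$ implicitly to obtain
\[
\psi'(y) = \frac{2y}{2\psi(y) + (\xi_\varepsilon^f)'(\psi(y))},
\]
and then evaluate at a fixed point $y=j\varepsilon$ (where $\psi(j\varepsilon)=j\varepsilon$) to get
\[
\psi'(j\varepsilon) = \frac{2j\varepsilon}{2j\varepsilon + (\xi_\varepsilon^f)'(j\varepsilon)}.
\]
Lemma \ref{lema finitos pontos fixos trajetorias}(ii) gives $(\xi_\varepsilon^f)'(j\varepsilon) = (-1)^j\varepsilon^k h(j\varepsilon)(k-j)!(j-1)!$; since $h(j\varepsilon)=e^{-1/(j\varepsilon)}$, the absolute value of this correction decays faster than any power of $\varepsilon$ and is, in particular, strictly less than $2j\varepsilon$ once $\varepsilon$ is small enough. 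Hence the denominator is positive, and the sign of $(\xi_\varepsilon^f)'(j\varepsilon)$ dictates the comparison with $1$: for $j$ even the correction is positive, so the denominator exceeds $2j\varepsilon$ and $\psi'(j\varepsilon)<1$; for $j$ odd the correction is negative, so the denominator is smaller than $2j\varepsilon$ (but still positive) and $\psi'(j\varepsilon)>1$.

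The delicate step, and the main place to be careful, will be justifying that $\psi$ is genuinely $C^1$ near each $j\varepsilon$ so that the implicit differentiation above is legitimate. This reduces to verifying $\partial_w F(j\varepsilon,j\varepsilon) = -2j\varepsilon-(\xi_\varepsilon^f)'(j\varepsilon)\neq 0$, which is precisely the smallness estimate already invoked. The implicit function theorem then produces a local smooth branch $w=\psi(y)$ through $(j\varepsilon,j\varepsilon)$, and this branch coincides with the dynamically defined return map because for $\varepsilon=0$ both reduce to the identity (Proposition \ref{propiedades}) and both vary continuously with $\varepsilon$ by \eqref{convergencia}. Once that identification is in place, the rest of the argument is the elementary fraction comparison above.
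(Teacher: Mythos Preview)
Your proposal is correct and follows essentially the same route as the paper: both use Lemma~\ref{lema primeiro retorno} to reduce the fixed-point question to the roots of $\xi_\varepsilon^f$, differentiate the implicit relation \eqref{eq1} to obtain $(\varphi_{Z^{f}_{\varepsilon}}^2)'(j\varepsilon)=2j\varepsilon\big/\big(2j\varepsilon+\tfrac{\partial \xi_\varepsilon^f}{\partial y}(j\varepsilon)\big)$, and then read off the comparison with $1$ from the sign of $\tfrac{\partial \xi_\varepsilon^f}{\partial y}(j\varepsilon)$ given in Lemma~\ref{lema finitos pontos fixos trajetorias}. Your additional care about the positivity of the denominator and the $C^1$ regularity of $\psi$ via the implicit function theorem are details the paper leaves implicit, but the underlying argument is the same.
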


\begin{proof}
According to Lemma \ref{lema primeiro retorno},
$y=\varphi_{Z^{f}_{\varepsilon}}^2(y)$ if, and only if,
$\varphi_{Z^{f}_{\varepsilon}}^2(y)$ is a zero of the function $\xi_\varepsilon^f(y)$, i.e., by Lemma \ref{lema finitos pontos fixos
trajetorias}, $y=j\varepsilon$ for $j=1,2,\dots,k$. Differentiating
\eqref{eq1} with respect to $y$ we obtain
$2y-2\varphi_{Z^{f}_{\varepsilon}}^2(y)(\varphi_{Z^{f}_{\varepsilon}}^2)^\prime(y) -
\frac{\partial \xi_\varepsilon^f}{\partial
y}(\varphi_{Z^{f}_{\varepsilon}}^2(y))(\varphi_{Z^{f}_{\varepsilon}}^2)^\prime(y)=0$,
and so
\[(\varphi_{Z^{f}_{\varepsilon}}^2)^\prime(j\varepsilon) = \frac{2j\varepsilon}{2j\varepsilon+\frac{\partial \xi_\varepsilon^f}{\partial y}(j\varepsilon)}.\]
According to Lemma \ref{lema finitos pontos fixos trajetorias}, if $j$ is even then $\frac{\partial \xi_\varepsilon^f}{\partial
y}(j\varepsilon)>0$ and it implies that
$(\varphi_{Z^{f}_{\varepsilon}})^\prime(j\varepsilon)<1$. And if $j$ is odd
then $(\varphi_{Z^{f}_{\varepsilon}})^\prime(j\varepsilon)>1$.
\end{proof}
\begin{remark}\label{observacao repetir lema 8 para infinitos} A similar result obtained in Lemma \ref{lema derivada aplicacao retorno}, for the PSVF $Z^i_\varepsilon$, also holds. \end{remark}
With the previous lemmas we can stated the following proposition.

\begin{proposition}\label{proposicao estabilidade ciclos}
Consider $Z^{\rho}_\varepsilon$ given by \eqref{eq centro perturbado}.
Then, for $\varepsilon = 0$, $Z^{\rho}_{\varepsilon}=Z_0$ given by \eqref{eq forma normal centro} has a continuous of topological cylinders and
\begin{itemize}
\item[(I)] For $\rho=f$
\begin{itemize}
\item[(I.i)] $Z^f_{\varepsilon}$ has $k$ isolated topological cylinders when $\varepsilon > 0$,

\item[(I.ii)] The topological cylinder passing through $y=j \varepsilon,z=0$ is
attractor (respectively, repeller) if $j$ is even (respectively,
odd), with $j \in \{  1,2,\dots k\}$.
\end{itemize}

\item[(II)] For $\rho=i$
\begin{itemize}
\item[(II.i)]  $Z^i_{\varepsilon}$ has infinitely many isolated topological cylinders when $\varepsilon \neq 0$,

\item[(II.ii)] The invariant cylinder passing through $y=\varepsilon^2/j,z=0$ is attractor
(respectively, repeller) if $j$ is even (respectively, odd).
\end{itemize}
\end{itemize}
\end{proposition}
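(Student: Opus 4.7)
The plan is to reduce everything to the one-dimensional return map already analyzed in Lemmas \ref{lema primeiro retorno}--\ref{lema derivada aplicacao retorno} and Remark \ref{observacao repetir lema 8 para infinitos}. The key structural remark is that in \eqref{eq centro perturbado} the first component of both $X_0$ and $Y^\rho_\varepsilon$ vanishes identically, so every vertical plane $\pi_c=\{(x,y,z)\, : \, x=c\}$ is invariant under the flow of $Z^\rho_\varepsilon$; in particular the full return map splits as $\varphi_{Z^\rho_\varepsilon}(x,y)=(x,\varphi^2_{Z^\rho_\varepsilon}(y))$, and the question of invariant cylinders becomes equivalent to locating fixed points of the one-dimensional map $\varphi^2_{Z^\rho_\varepsilon}$.

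For $\varepsilon=0$ the perturbation $\xi^\rho_0$ vanishes identically, so $Z^\rho_0=Z_0$ and Proposition \ref{propiedades} supplies the continuum of cylinders. For $\varepsilon\neq 0$, each fixed point $y^*>0$ of $\varphi^2_{Z^\rho_\varepsilon}$ produces, by sweeping $x\in\mathbb{R}$, a one-parameter family of periodic orbits of $Z^\rho_\varepsilon$ whose union is an invariant topological cylinder; distinct fixed points yield disjoint cylinders, and the cylinders are isolated precisely because the fixed points are. Combining Lemma \ref{lema primeiro retorno} (which reduces the fixed-point equation to $\xi^\rho_\varepsilon(\varphi^2_{Z^\rho_\varepsilon}(y))=0$) with Lemma \ref{lema finitos pontos fixos trajetorias} yields exactly the $k$ fixed points $\{j\varepsilon : j=1,\ldots,k\}$ in the case $\rho=f$, proving (I.i); the parallel application of Lemma \ref{lema infinitos pontos fixos trajetorias} yields the countable family $\{\varepsilon^2/j : j\in\mathbb{N}\}$ in the case $\rho=i$, proving (II.i).

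For (I.ii) and (II.ii), the stability of the cylinder through a fixed point $y^*$ is governed by the derivative $(\varphi^2_{Z^\rho_\varepsilon})'(y^*)$ of the one-dimensional return map: strictly less than $1$ gives a hyperbolic attracting cylinder, strictly greater than $1$ a hyperbolic repelling one. The signs of $\frac{\partial \xi^\rho_\varepsilon}{\partial y}$ at each root, computed in Lemmas \ref{lema finitos pontos fixos trajetorias} and \ref{lema infinitos pontos fixos trajetorias}, feed through the implicit-differentiation formula for $(\varphi^2_{Z^f_\varepsilon})'$ derived in Lemma \ref{lema derivada aplicacao retorno} (and its analogue recorded in Remark \ref{observacao repetir lema 8 para infinitos}) to give exactly the claimed parity pattern. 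The only conceptual step that is not purely mechanical is confirming that a one-dimensional fixed point really does assemble into an embedded invariant cylinder rather than a degenerate invariant set; this is immediate here, since the flow on each $\pi_c$ is a rigid translate in $x$ of the flow on $\pi_0$, so the periodic orbits glue trivially into a smooth cylinder. Accordingly I do not expect any genuinely hard step: the main care required is in bookkeeping the isolation of fixed points and in verifying that the derivative formula for $\varphi^2_{Z^i_\varepsilon}$ matches the one used for $\varphi^2_{Z^f_\varepsilon}$.
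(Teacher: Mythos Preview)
Your proposal is correct and follows essentially the same route as the paper: reduce to the one-dimensional return map via Lemma \ref{lema primeiro retorno}, locate its fixed points using Lemmas \ref{lema finitos pontos fixos trajetorias} and \ref{lema infinitos pontos fixos trajetorias}, and read off the stability from Lemma \ref{lema derivada aplicacao retorno} and Remark \ref{observacao repetir lema 8 para infinitos}. If anything, you are slightly more explicit than the paper in justifying why a fixed point of $\varphi^2_{Z^\rho_\varepsilon}$ actually assembles into an invariant cylinder and in handling the $\varepsilon=0$ case via Proposition \ref{propiedades}.
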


\begin{proof}
According to Lemma \ref{lema primeiro retorno},
$y=\varphi_{Z_{\varepsilon}^{\rho}}^2(y)$ if, and only if,
$\varphi_{Z_{\varepsilon}^{\rho}}^2(y)$ is a zero of the function $\xi_\varepsilon^\rho(y)$.

Therefore when $\rho=f$, by Lemma \ref{lema finitos pontos fixos trajetorias},  the fixed points of $\varphi_{Z_{\varepsilon}^{f}}^2$ are
given by $y=j\varepsilon$ for $j=1,2,\dots,k$. Since an isolated fixed
point of $\varphi_{Z_{\varepsilon}^{f}}^2$ corresponds to a hyperbolic invariant cylinder of $Z_{\varepsilon}^{f}$, items (I.i) and (I.ii) follow
immediately from Lemma \ref{lema finitos pontos fixos trajetorias} (item (ii)), and Lemma \ref{lema derivada aplicacao retorno}.

On other hand when $\rho=i$, by Lemma \ref{lema infinitos pontos
fixos trajetorias},  the fixed points of $\varphi_{Z_{\varepsilon}^{i}}^2$
are given by $y=\varepsilon^2/j$ for $j=1,2,3,\dots$. Since an isolated fixed
point of $\varphi_{Z_{\varepsilon}^{i}}^2$ corresponds to a hyperbolic invariant cylinder of $Z_{\varepsilon}^{i}$, items (II.i) and (II.ii) follow
immediately from Lemma \ref{lema infinitos pontos fixos trajetorias}
and Remark \ref{observacao repetir lema 8 para infinitos}.
\end{proof}
Finally, we can prove Theorem B.
\begin{proof}[Proof of Theorem B]
Let ${\mathcal W}\subset\Omega$ be an arbitrary neighborhood of $Z_0$. According to \eqref{convergencia}, for $\varepsilon>0$ sufficiently small we have that $Z^\rho_\varepsilon \in {\mathcal W}$. The conclusion of the proof follows from Proposition \ref{proposicao estabilidade ciclos} just taking $\widetilde{Z}=Z^\rho_\varepsilon$.
\end{proof}

\section{Combining the two previous perturbations $-$ Proof of Theorem C}\label{secao prova teo C}

Now we combine the perturbations \eqref{eq forma normal Z com pert X} and \eqref{eq centro perturbado} of the normal form \eqref{eq forma normal centro} given in the two previous sections in order to obtain Theorem C. In fact, it gives rise to the following PSVF

\begin{equation}\label{eq forma normal Z com pert X e em Y}
Z_{k \mathcal{L}}(x,y,z)= \left\{
\begin{array}{ccl}
X_\mathcal{L}(x,y,z)= & \left(
\begin{array}{c}
\Pi_{i=0}^{\mathcal{L}-1} (x-i \mu) \\
-1  \\
2 y
\end{array}
\right) 
& \hbox{if $z \geq 0$,} \\
Y_k (x,y,z) = & \left(
\begin{array}{c}
0 \\
1  \\
2 y + \frac{\partial \xi_\varepsilon^\rho}{\partial y} (y)
\end{array}
\right)& \hbox{if $z \leq 0$.}
\end{array}
\right.
\end{equation}where $i\in \{0,1,2,\hdots,\mathcal{L}-1\}$, either $\rho=f$ or $\rho =i$, $\xi_\varepsilon^\rho$ is given in the previous section and $\mu,\varepsilon\in\R$ are small numbers.

\begin{proof}[Proof of Theorem C]
	First of all note that the two perturbations considered are uncoupled. 
	
	Theorem A ensures the existence of exactly $\mathcal{L}$ $Z_{k \mathcal{L}}$-invariant planes $\pi_i$. Moreover, the Proposition \ref{prop estabilidade centros} guarantees that these planes are repellers (resp., attractors) for $i$ even (resp., odd).
	
	Theorem B ensures the existence of exactly $k$ $Z_{k \mathcal{L}}$-invariant topological cylinders. Moreover, items I.ii and II.ii of Proposition \ref{proposicao estabilidade ciclos} guarantees that these nested cylinders are repellers (resp., attractors) for $j$ odd (resp., even), where $j=1,2, \hdots, k$.
	
	The intersection of the $\mathcal{L}$ planes of Theorem A and the $k$ cylinders of Theorem B, gives rise to the born of $k.\mathcal{L}$ limit cycles. Moreover, Propositions \ref{prop estabilidade centros} and \ref{proposicao estabilidade ciclos} ensures that these limit cycles are hyperbolic. 
	
	The stability of the limit cycle living at the intersection of the plane $\pi_{i}$ with the cylinder $j$ is of attractor kind when $i$ is odd and $j$ is even, of repeller kind when $i$ is even and $j$ is odd and of saddle kind otherwise. 
\end{proof}

\noindent {\textbf{Acknowledgments.} T. Carvalho is partially supported by the CAPES grant number 1576689 (from the program PNPD) and also is grateful to the	 FAPESP/Brazil grant number 2013/34541-0, the CNPq-Brazil grant number 443302/2014-6 and the CAPES grant number 88881.030454/2013-01 (from the program CSF-PVE). 

\smallskip B. Freitas is partially supported by the
PROCAD-88881.068462/2014-01 and by the
PRONEX/CNPq/FAPEG-2012.10.26.7000.803.

\end{document}